\theoremstyle{plain}
\newtheorem{thm}{Theorem}[section]
\newtheorem{cor}[thm]{Corollary}
\newtheorem{note}{Note}[section]
\theoremstyle{definition}
\newtheorem{defn}{Definition}[section]
\begin{document}

\setcounter {page}{1}
\title{ Generalized statistical limit points and statistical cluster points via ideal}

\author[P. Malik and AR. Ghosh]{ Prasanta Malik* and Argha Ghosh*\ }
\newcommand{\acr}{\newline\indent}
\maketitle
\address{{*\,} Department of Mathematics, The University of Burdwan, Golapbag, Burdwan-713104,
West Bengal, India.
                Email: pmjupm@yahoo.co.in. , papanargha@gmail.com \acr
           }

\maketitle
\begin{abstract}
In this paper we have extended the notion of statistical limit point as introduced by Fridy\cite{Fr2} to $I$-statistical limit point of sequences of real numbers and studied some basic properties of the set of all $I$-statistical limit points and $I$-statistical cluster points of real sequences. 
\end{abstract}
\author{}
\maketitle
{ Key words and phrases : $I$-statistical convergence, $I$-statistical limit point,
$I$-statistical cluster point, $I$-asymptotic density, \textit{I}- statistical boundedness.} \\

\textbf {AMS subject classification (2010) : 40A05, 40D25} .  \\

\section{\textbf{Introduction:}}
 The usual notion of convergence of real sequences was extended to statistical convergence independently by Fast\cite{Fa} and Schoenberg\cite{Sc} based on the notion of natural density of subsets of $\mathbb N$, the set of all positive integers. Since then a lot of works has been done in this area (in particular after the seminal works of Salat\cite{Sa} and Fridy\cite{Fr1}). Following the notion of statistical convergence in \cite{Fr2} Fridy introduced and studied the notions of statistical limit points and statistical cluster points of real sequences. More primary work on this convergence can be found from\cite{Co1, Co2, Co3, Fr3, Pe, St}  where other references can be found.
 
The concept of statistical convergence was further extended to $I$-convergence by Kostyrko et. al.\cite{Ko1} using the notion of ideals of $\mathbb N$.  Using this notion of ideals the concepts of statistical limit point and statistical cluster point were naturally extended to $I$-limit point and $I$-cluster point respectively by Kostyrko et. al. in \cite{Ko2}. More works in this line can be found in \cite{De, La1, La2} and many others.

Recently in \cite{Da1} the notion of $I$-statistical convergence and $I$-statistical cluster point of real sequences have been introduced by Das et. al. using ideals of $\mathbb N$, which naturally extends the notions of statistical convergence and statistical cluster point. Further works on such summability method can be found in \cite{ Da2, Mu, Da3} and many others.

In this paper using the notion of $I$-statistical convergence we extend the concept of statistical limit point to $I$-statistical limit point of sequences of real numbers and study some properties of $I$-statistical  limit points and $I$-statistical cluster points of sequences of real numbers. We also study the sets of $I$-statistical limit points and $I$-statistical cluster points of sequences of real numbers and relationship between them.

\section{\textbf{Basic Definitions and Notations}}
In this section we recall some basic definitions and notations.
 Throughout the paper $\mathbb N$ denotes the set of all positive integers, $\mathbb R$ denotes the set of all real numbers and $x$  denotes the sequence $\{x_k\}_{k\in\mathbb N}$ of real numbers.
\begin{defn}\cite{Ne}
A subset $K$ of $\mathbb N$ is said to have natural density (or asymptotic density) $d(K)$ if
 \begin{center}
 $d(K)=\underset{n\rightarrow \infty}{\lim}\frac{\left|K(n)\right|}{n}$ 
 \end{center}
 where $K(n)=\left\{j\in K:j\leq n\right\}$ and $\left|K(n)\right|$ represents the number of elements in $K(n)$.
\end{defn}
If $\{x_{k_j}\}_{j\in\mathbb N}$ is a subsequence of the sequence $ x = \{x_k\}_{k\in\mathbb N}$ of real numbers and
$A =\{k_j:j\in\mathbb N\}$, then we abbreviate $\{x_{k_j}\}_{j\in\mathbb N}$  by $\{x\}_A$. In case $d(A) = 0$,
$\{x\}_A$ is called a subsequence of natural density zero or a thin subsequence of $x$. On the other
hand, $\{x\}_A$ is a non-thin subsequence of $x$ if $d(A)$ does not have natural density zero i.e., if either $d(A)$  is
a positive number or $A$ fails to have natural density.
\begin{defn}\cite{Fr1}
Let $x=\{x_k\}_{k \in \mathbb{N}}$ be a sequence of real numbers.
Then $x$ is said to be statistically convergent to $ \xi $
if for any $\varepsilon > 0$
\begin{center}
$d(\{ k : \left| x_k - \xi
\right| \geq  \varepsilon\})=0 $.
\end{center}
In this case we write $ st-\underset{k\rightarrow \infty}{\lim}~ x_k = \xi $.
\end{defn}
\begin{defn}\cite{Pe}
A sequence $ x=\left\{x_{k}\right\}_{k\in\mathbb N}$ is said to be statistically bounded if there exists a compact set $C$ in $\mathbb R$ such that  $d(\{ k:k \leq n, x_{k}\notin C \})=0.$
\end{defn}

\begin{defn}\cite{Fr2}
A real number $l$ is a statistical limit point of the sequence
$x=\{x_k\}_{k\in\mathbb N}$ of real numbers, if there exists a nonthin subsequence of $x$ that converges to $l$. 
\end{defn}
A real number $L$ is an ordinary limit point of a sequence $x$ if there is a subsequence of $x$ that converges to $L$. The set of all ordinary limit points and the set of all statistical limit points of the sequence $x$ are denoted by $L_x$ and $\Lambda_x$ respectively. Clearly $\Lambda_x\subset L_x$.
\begin{defn}\cite{Fr2}
 A real number $y$ is a statistical cluster point of the sequence
$x=\{x_k\}_{k\in\mathbb N}$ of real numbers, if for any $\varepsilon > 0$ the set $\{k \in\mathbb N : \left|x_k - y\right| <\varepsilon\}$ does not have natural density zero. 
\end{defn}
The set of all statistical cluster points of $x$ is denoted by $\Gamma_x$. Clearly $\Gamma_x\subset L_x$.

We now recall definitions of ideal and filter in a non-empty set.
\begin{defn}\cite{Ko1}
 Let $X\neq\phi $. A class $ I $ of subsets of $X$ is said to be
an ideal in X provided, $I$ satisfies the conditions:
\\(i)$\phi \in I$,
\\(ii)$ A,B \in I \Rightarrow A \cup B\in I,$
\\(iii)$ A \in I, B\subset A \Rightarrow B\in I$.
\end{defn}

An ideal $I$ in a non-empty set $X$ is called non-trivial if $ X \notin I.$
\begin{defn}\cite{Ko1}
 Let $X\neq\phi  $. A non-empty class $\mathbb F $ of subsets of $X$ is
said to be a filter in $X$ provided that:
\\(i)$\phi\notin \mathbb F $,
\\(ii) $A,B\in\mathbb F \Rightarrow A \cap B\in\mathbb F,$
\\(iii)$ A \in\mathbb F, B\supset A \Rightarrow B\in\mathbb F$.
\end{defn}
\begin{defn}\cite{Ko1}
 Let $I$ be a non-trivial ideal in a non-empty set $ X$.
 Then the class $\mathbb F(I)$$ = \left\{M\subset X : \exists A \in I ~such~~ that ~M = X\setminus A\right\}$
is a filter on $X$. This filter $\mathbb F(I) $ is called the filter associated with $I$.
\end{defn}
 A non-trivial ideal $I$ in $X(\neq \phi)$ is called admissible if $\left\{x\right\} \in I$ for each $x \in X$.
Throughout the paper we take $I$ as a non-trivial admissible ideal in $\mathbb{N} $ unless otherwise mentioned.
\begin{defn}\cite{Ko1}
Let $x=\{x_k\}_{k \in \mathbb{N}}$ be a sequence of real numbers.
Then $x$ is said to be $I$-convergent to $ \xi $
if for any $\varepsilon > 0$
\begin{center}
$\{ k : \left| x_k - \xi
\right| \geq  \varepsilon\} \in I $.
\end{center}
In this case we write $ I-\underset{k\rightarrow \infty}{\lim}~ x_k = \xi $.
\end{defn}
\begin{defn}\cite{De}
A sequence $x=\{x_k\}_{k\in\mathbb N}$ of real number is said to be $I$-bounded if there exists a real number $G>0$ such that $\{k:\left|x_k\right|>G\}\in I$.
\end{defn}

\begin{defn}\cite{Da1}
Let $x=\{x_k\}_{k \in \mathbb{N}}$ be a sequence of real numbers.
Then $x$ is said to be
 $I$-statistically convergent to $ \xi $
if for any $\varepsilon > 0$ and $ \delta > 0$
\begin{center}
$\{ n \in \mathbb{N}: \frac{1}{n}|\{ k \leq n: \left| x_k - \xi
\right| \geq  \varepsilon\}| \geq \delta \} \in I $.
\end{center}
In this case we write $ {I\mbox{-}st\mbox{-}\lim}~ x = \xi $.
\end{defn}


\section{\textbf{$I$-statistical limit points and $I$-statistical cluster points}}
In this section, following the line of Fridy \cite{Fr2} and Pehlivan et. al. \cite{Pe}, we introduce the notion of $I$-statistical limit point of real sequences and present an $I$-statistical analogue of some results in those papers.
\begin{defn}\cite{Da2}
A subset $K$ of $\mathbb N$ is said to have $I$-natural density (or, $I$-asymptotic density)
$d_I(K)$ if
 \begin{center}
 $d_I(K)=I-\underset{n\rightarrow \infty}{\lim}\frac{\left|K(n)\right|}{n}$ 
 \end{center}
 where $K(n)=\left\{j\in K:j\leq n\right\}$ and $\left|K(n)\right|$ represents the number of elements in $K(n)$.
\end{defn}
\begin{note}
From the above definition, it is clear that, if $d(A) = r, A \subset \mathbb{N}$, then $d_I (A) = r $ 
for any nontrivial admissible ideal $I$ in $\mathbb{N}$. 
\end{note}
In case $d_I(A) = 0$,
$\{x\}_A$ is called a subsequence of $I$-asymptotic density zero, or a $I$-thin subsequence of $x$. On the other
hand, $\{x\}_A$ is an $I$-nonthin subsequence of $x$, if $d_I(A)$ does not have density zero i. e., if either $d_I(A)$  is
a positive number or $A$ fails to have $I$-asymptotic density.

\begin{defn}
 A real number $l$ is an $I$-statistical limit point of a sequence
$x=\{x_k\}_{k\in\mathbb N}$ of real numbers, if there exists an $I$-nonthin subsequence of $x$ that converges to $l$. The set of all $I$-statistical limit points of the sequence $x$ is denoted by $\Lambda_x^S(I)$. 
\end{defn}
\begin{defn}\cite{Da2}
 A real number $y$ is an $I$-statistical cluster point of a sequence
$x=\{x_k\}_{k\in\mathbb N}$ of real numbers, if for every $\varepsilon > 0$, the set $\{k \in\mathbb N : \left|x_k - y\right| <\varepsilon\}$ does not have $I$-asymptotic density zero. The set of all $I$-statistical cluster points of $x$ is denoted by $\Gamma_x^S(I)$.
\end{defn}
\begin{note}
If $I=I_{fin}=\{A\subset \mathbb N: \left|A\right|<\infty\}$, then the notions of $I$-statistical limit points and $I$-statistical cluster points coincide with the notions of statistical limit points and statistical cluster points respectively.
\end{note}

We first present an $I$-statistical analogous of some results in \cite{Fr2}.
\begin{thm}
Let $x=\{x_k\}_{k\in\mathbb N}$ be a sequence of real numbers. Then $\Lambda_x^S(I)\subset \Gamma_x^S(I)$.
\end{thm}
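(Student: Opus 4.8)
The plan is to take an arbitrary $l \in \Lambda_x^S(I)$ and show $l \in \Gamma_x^S(I)$. By definition of $I$-statistical limit point, there is a set $A = \{k_1 < k_2 < \cdots\} \subset \mathbb N$ with $d_I(A)$ not equal to zero (that is, either $d_I(A) > 0$ or $A$ fails to have an $I$-asymptotic density) such that the subsequence $\{x\}_A$ converges to $l$ in the ordinary sense. Fix $\varepsilon > 0$. Since $\{x\}_A \to l$, all but finitely many terms of this subsequence lie in $(l-\varepsilon, l+\varepsilon)$; write $A' = \{k \in A : |x_k - l| < \varepsilon\}$, so that $A \setminus A'$ is finite. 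The goal is to conclude that $B := \{k \in \mathbb N : |x_k - l| < \varepsilon\}$ does not have $I$-asymptotic density zero; since $A' \subset B$, it suffices to show $d_I(A')$ is not zero.

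The key step is to compare $A'$ with $A$ at the level of $I$-asymptotic density. Since $A \setminus A'$ is a finite set, for every $n$ we have $\big| |A(n)| - |A'(n)| \big| \le |A \setminus A'|$, hence
\[
\left| \frac{|A(n)|}{n} - \frac{|A'(n)|}{n} \right| \le \frac{|A\setminus A'|}{n} \longrightarrow 0
\]
as $n \to \infty$ in the ordinary sense, and therefore also in the $I$-sense (using the Note after Definition~3.1, or simply that ordinary convergence to $0$ implies $I$-convergence to $0$ for an admissible ideal). Consequently $\frac{|A'(n)|}{n}$ is $I$-convergent precisely when $\frac{|A(n)|}{n}$ is, and in that case to the same limit. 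This gives: if $d_I(A')$ existed and equalled $0$, then $d_I(A)$ would exist and equal $0$, contradicting the choice of $A$. Hence $d_I(A')$ is not zero (either it is a positive number, or it does not exist), and since $B \supset A'$ one checks in the same way that $B$ cannot have $I$-asymptotic density $0$ either: if $d_I(B) = 0$, then since $A'(n) \le B(n)$ for all $n$ and the $I$-limit of $\frac{|B(n)|}{n}$ is $0$, a squeeze argument forces $d_I(A') = 0$ (the sequence $\frac{|A'(n)|}{n}$ is nonnegative and dominated by an $I$-null sequence, so it is $I$-null), again a contradiction. Therefore $l \in \Gamma_x^S(I)$.

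The main obstacle is the bookkeeping around the \emph{two} ways $d_I$ can ``fail to be zero'': one must handle both the case where the relevant density is a positive number and the case where it simply does not exist, and argue via the contrapositive that an $I$-null density for the larger set would force an $I$-null density for the smaller one. This rests on the elementary but essential fact that if $0 \le a_n \le b_n$ and $I\text{-}\lim b_n = 0$ then $I\text{-}\lim a_n = 0$, which follows directly from the ideal axioms since $\{n : a_n \ge \delta\} \subset \{n : b_n \ge \delta\} \in I$. Once that comparison principle is in hand, the argument is routine; the finiteness of $A \setminus A'$ makes the density comparison between $A$ and $A'$ immediate.
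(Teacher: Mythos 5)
Your proposal is correct and follows essentially the same route as the paper: discard the finitely many indices of the subsequence outside the $\varepsilon$-neighbourhood, then argue by contradiction that an $I$-null density for $\{k:|x_k-l|<\varepsilon\}$ would force an $I$-null density for the subsequence's index set. You merely spell out more explicitly the two comparison facts (invariance of $d_I$ under finite changes and domination of a nonnegative sequence by an $I$-null one) that the paper uses implicitly.
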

\begin{proof}
Let $\alpha\in \Lambda_x^S(I) $. So we have a subsequence $\{x_{k_j}\}_{j\in\mathbb N}$ of $x$ with $lim~ x_{k_j}=\alpha$ and $d_I(K)\neq 0$, where $K=\{k_j:j\in\mathbb N\}$. Let $\varepsilon>0$ be given. Since $lim~ x_{k_j}=\alpha$, so $ B = \{k_j:\left|x_{k_j}-\alpha\right|\geq \varepsilon\}$ is a finite set. Thus 
\begin{center}
$\{k\in\mathbb N:\left|x_k-\alpha\right|<\varepsilon\}\supset \{k_j:j\in\mathbb N\}\setminus B$
\end{center}
\begin{center}
$\Rightarrow K = \{k_j:j\in\mathbb N\}\subset \{k\in\mathbb N:\left|x_k-\alpha\right|<\varepsilon\}\cup B$.
\end{center}
Now if $d_I(\{k\in\mathbb N:\left|x_k-\alpha\right|<\varepsilon\})=0$, then we have $d_I(K)=0$, which is a contradiction. Thus $\alpha$ is an $I$-statistical cluster point of $x$. Since $\alpha\in \Lambda_x^S(I)$ is arbitrary, so $\Lambda_x^S(I)\subset \Gamma_x^S(I)$.
\end{proof}
\begin{note}
The set $\Lambda_x^S(I)$ of all $I$-statistical limit points of a sequence $x$ may not be equal to the set $\Gamma_x^S(I)$ of all $I$-statistical cluster points of $x$. To show this we cite the following example.
 \end{note}
\begin{thm}
If $x=\{x_k\}_{k\in\mathbb N}$ and $y=\{y_k\}_{k\in\mathbb N}$ are sequences of real numbers such that $d_I(\{k:x_k\neq y_k\})=0$, then $\Lambda_x^S(I)=\Lambda_y^S(I)$ and $\Gamma_x^S(I)=\Gamma_y^S(I)$.
\end{thm}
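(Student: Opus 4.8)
The statement is symmetric in $x$ and $y$ and splits into two assertions about the limit-point sets and the cluster-point sets, each of which again splits into two inclusions; by symmetry it suffices to prove $\Lambda_x^S(I)\subset\Lambda_y^S(I)$ and $\Gamma_x^S(I)\subset\Gamma_y^S(I)$. Write $E=\{k:x_k\neq y_k\}$, so $d_I(E)=0$ by hypothesis, i.e. $I\mbox{-}\lim_n \frac{1}{n}|E(n)|=0$. The guiding idea is that modifying a sequence on a set of $I$-asymptotic density zero changes neither which subsequences are $I$-nonthin nor which points have the density-zero preimage property, because the relevant index sets differ only within $E$.

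\textbf{Cluster points.} Let $y_0\in\Gamma_x^S(I)$ and fix $\varepsilon>0$. Set $A=\{k:|x_k-y_0|<\varepsilon\}$ and $B=\{k:|y_k-y_0|<\varepsilon\}$. Then $A\setminus B\subset E$ and $B\setminus A\subset E$, so $A\subset B\cup E$ and hence $|A(n)|\le |B(n)|+|E(n)|$ for every $n$, giving $\frac{1}{n}|A(n)|\le\frac{1}{n}|B(n)|+\frac{1}{n}|E(n)|$. If $d_I(B)=0$ held, then since $d_I(E)=0$ and $\mathbb F(I)$ is closed under finite intersections, the set of $n$ for which both $\frac{1}{n}|B(n)|<\delta/2$ and $\frac{1}{n}|E(n)|<\delta/2$ lies in $\mathbb F(I)$ for each $\delta>0$; on that set $\frac{1}{n}|A(n)|<\delta$, which would force $d_I(A)=0$, contradicting $y_0\in\Gamma_x^S(I)$. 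Hence $d_I(B)\neq 0$, so $y_0\in\Gamma_y^S(I)$.

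\textbf{Limit points.} Let $l\in\Lambda_x^S(I)$, so there is $K=\{k_j:j\in\mathbb N\}$ with $d_I(K)\neq 0$ and $\lim_j x_{k_j}=l$. Put $K'=K\setminus E$. Since $d_I(E)=0$, the same filter argument as above (using $|K(n)|\le|K'(n)|+|E(n)|$) shows $d_I(K')$ is not zero: if it were, then $d_I(K)=0$. Moreover $K'$ is infinite, for otherwise $d_I(K')=0$ since $I$ is admissible. On $K'$ we have $x_k=y_k$, so $\lim_{j,\,k_j\in K'} y_{k_j}=\lim_{j,\,k_j\in K'} x_{k_j}=l$ (a subsequence of a convergent sequence). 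Thus $\{y\}_{K'}$ is an $I$-nonthin subsequence of $y$ converging to $l$, so $l\in\Lambda_y^S(I)$.

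\textbf{Main obstacle.} Everything reduces to one recurring subtlety: for $I$-asymptotic density, the relation ``$d_I(P)=0$ and $d_I(Q)=0$ imply $d_I(P\cup Q)=0$'' is not quite the finite additivity of ordinary density — it must be extracted from the definition of $I$-convergence of the ratio sequences together with the filter property of $\mathbb F(I)$, as sketched above. Once that lemma-style fact is in hand (and it should perhaps be isolated as a small remark, since it is used repeatedly), the proof is the routine symmetric bookkeeping indicated. I would state the $P\cup Q$ fact once and then invoke it in both parts rather than repeating the $\delta/2$ splitting.
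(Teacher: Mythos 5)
Your proof is correct and takes essentially the same route as the paper: restrict attention to the exceptional set $E=\{k:x_k\neq y_k\}$ of $I$-density zero and argue by contradiction using the fact that a union of two sets of $I$-asymptotic density zero again has $I$-asymptotic density zero. The only difference is that you make this subadditivity explicit via the filter $\mathbb F(I)$ and the $\delta/2$ splitting, whereas the paper uses it implicitly (when it passes from $d_I(\{k:x_k=y_k\})=1$ to the nonvanishing density of $\{k:\left|x_k-\gamma\right|<\varepsilon\}\cap\{k:x_k=y_k\}$, and when it asserts that the indices $k_j$ with $x_{k_j}=y_{k_j}$ form an $I$-nonthin set), so your write-up is simply a more careful version of the paper's argument.
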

\begin{proof}
Let $\gamma\in \Gamma_x^S(I)$ and $\varepsilon>0$ be given. Then $\{k:\left|x_k-\gamma\right|<\varepsilon\}$ does not have $I$-asymptotic density zero. Let $A	=\{k:x_k=y_k\}$. Since $d_I(A)=1$ so $\{k:\left|x_k-\gamma\right|<\varepsilon\}\cap A$ does not have $I$-asymptotic density zero. Thus $\gamma\in \Gamma_y^S(I)$. Since $\gamma\in \Gamma_x^S(I)$ is arbitrary, so $\Gamma_x^S(I)\subset \Gamma_y^S(I)$. By symmetry we have $\Gamma_y^S(I)\subset \Gamma_x^S(I)$. Hence $\Gamma_x^S(I)=\Gamma_y^S(I)$.

Also let $\beta\in \Lambda_x^S(I) $. Then $x$ has an $I$-nonthin subsequence $\{x_{k_j}\}_{j\in\mathbb N}$ that converges to $\beta$. Let $K=\{k_j:j\in\mathbb N\}$. Since $d_I(\{k_j: x_{k_j}\neq y_{k_j}\})=0$, we have $d_I(\{k_j:x_{k_j}=y_{k_j}\})\neq 0$. Therefore from the latter set we have an $I$-nonthin subsequence $\{y\}_{K'}$ of $\{y\}_K$ that converges to $\beta$. Thus $\beta\in \Lambda_y^S(I)$. Since $\beta\in\Lambda_x^S(I)$ is arbitrary, so $\Lambda_x^S(I)\subset \Lambda_y^S(I)$. By symmetry we have $\Lambda_x^S(I)\supset \Lambda_y^S(I)$. Hence $\Lambda_x^S(I)=\Lambda_y^S(I)$.
\end{proof}

We now investigate some topological properties of the set $\Gamma_x^S(I)$ of all $I$-statistical cluster points of $x$.

\begin{thm}
Let $A$ be a compact set in $\mathbb R$ and $A\cap {\Gamma}_x^S(I)=\emptyset$. Then the set $\left\{k\in\mathbb N:x_k\in A\right\}$ has $I$-asymptotic density zero.
\end{thm}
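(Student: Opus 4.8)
The plan is to argue by contradiction using a compactness (finite subcover) argument, exactly parallel to the classical statistical case. Suppose, on the contrary, that the set $E = \{k \in \mathbb N : x_k \in A\}$ does \emph{not} have $I$-asymptotic density zero. For each point $a \in A$, since $a \notin \Gamma_x^S(I)$, there is some $\varepsilon_a > 0$ such that the set $\{k \in \mathbb N : |x_k - a| < \varepsilon_a\}$ has $I$-asymptotic density zero; write $U_a = (a - \varepsilon_a, a + \varepsilon_a)$. The collection $\{U_a : a \in A\}$ is an open cover of the compact set $A$, so it admits a finite subcover $U_{a_1}, \dots, U_{a_m}$ with $A \subset \bigcup_{i=1}^m U_{a_i}$.

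The key step is then to combine finitely many $I$-asymptotic density zero sets. For each $i$ let $K_i = \{k \in \mathbb N : x_k \in U_{a_i}\}$, so $d_I(K_i) = 0$; note $E \subset \bigcup_{i=1}^m K_i$. I would show that a finite union of sets of $I$-asymptotic density zero again has $I$-asymptotic density zero: since $d_I(K_i) = I\text{-}\lim_n \frac{|K_i(n)|}{n} = 0$, and $\frac{|(K_1 \cup \cdots \cup K_m)(n)|}{n} \leq \sum_{i=1}^m \frac{|K_i(n)|}{n}$, the $I$-limit of the left-hand side is $0$ by the elementary fact that $I$-convergence is preserved under finite sums and dominated termwise passage to the limit (here one uses that for any $\delta > 0$, $\{n : \frac{|K_i(n)|}{n} \geq \delta/m\} \in I$ for each $i$, and the union of these finitely many sets is in $I$ since $I$ is an ideal, forcing $\{n : \frac{|(\bigcup K_i)(n)|}{n} \geq \delta\} \in I$). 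Hence $d_I\!\left(\bigcup_{i=1}^m K_i\right) = 0$, and since $E \subset \bigcup_{i=1}^m K_i$ and $I$-asymptotic density is monotone, $d_I(E) = 0$, contradicting our assumption.

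The main obstacle, such as it is, is the bookkeeping in the finite-union step: one must be careful that $I$-asymptotic density zero is genuinely preserved under finite unions, which rests on the ideal property $A, B \in I \Rightarrow A \cup B \in I$ together with the definition of $I$-convergence applied to the density ratios. Monotonicity of $d_I$ (if $A \subset B$ and $d_I(B) = 0$ then $d_I(A) = 0$) is similarly a direct consequence of $\frac{|A(n)|}{n} \leq \frac{|B(n)|}{n}$ and the ideal being downward closed. Once these two routine facts are in hand, the compactness argument closes the proof. One could alternatively phrase the whole thing without contradiction — directly covering $A$, extracting a finite subcover, and computing $d_I(E) \le \sum_i d_I(K_i) = 0$ — but the contradiction form makes the role of the hypothesis $A \cap \Gamma_x^S(I) = \emptyset$ most transparent.
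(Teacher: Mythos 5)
Your proposal is correct and follows essentially the same route as the paper: cover $A$ by balls on which the density is zero (using $A\cap\Gamma_x^S(I)=\emptyset$), extract a finite subcover by compactness, and conclude via the inequality $|\{k\le n: x_k\in A\}|\le\sum_i|\{k\le n:|x_k-\xi_i|<\varepsilon_i\}|$ together with the ideal property. Your explicit $\delta/m$ bookkeeping for the finite-union step is in fact a slightly more careful rendering of what the paper dismisses as ``the property of $I$-convergence,'' and the contradiction framing is, as you note yourself, removable.
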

\begin{proof}
Since $A\cap {\Gamma}_x^S(I)=\emptyset$, so for any $\xi\in A$ there is a positive number $\varepsilon=\varepsilon(\xi)$ such that
\begin{center}
 $d_I(\left\{k:|x_k-\xi|<\varepsilon\right\})=0$.
\end{center}
Let $B_{\varepsilon(\xi)}(\xi)=\left\{y:|y-\xi|<\varepsilon\right\}$. Then the set of open sets $\left\{B_{\varepsilon(\xi)}(\xi):\xi\in A\right\}$ form an open covers of $A$. Since $A$ is a compact set so there is a finite subcover of  $\{B_{\varepsilon(\xi)}(\xi):\xi\in A\}$ for $A$, say $\left\{A_i=B_{\varepsilon(\xi_i)}(\xi_i):i=1,2,..q\right\}$. Then $A\subset\bigcup\limits_{i=1}^{q} A_i$ and 
\begin{center}
$ d_I(\left\{k:|x_k-\xi_i|<\varepsilon(\xi_i)\right\})=0$ for $i=1,2,...q$.
\end{center}
We can write 
\begin{center}
$\left|\left\{k:k\leq n; x_k\in A\right\}\right|\leq\sum\limits_{i=1}^{q}\left|\left\{k:k\leq n; |x_k-\xi_i|<\varepsilon(\xi_i)\right\}\right|$,
\end{center}
and by the property of $I$-convergence,
\begin{center}
 $
 I \mbox{-}\underset{n \rightarrow \infty}{\lim}\frac{\left|\left\{k:k\leq n; x_k\in A\right\}\right|}{n}
\leq
\sum\limits_{i=1}^{q}  I \mbox{-}\underset{n \rightarrow \infty}{\lim}\frac{|\{k:k\leq n;|x_k-\xi_i|<\varepsilon(\xi_i)\}|}{n}=0.$
\end{center}
Which gives $d_I(\left\{k:x_k\in A\right\})=0$ and this completes the proof.
\end{proof}
\begin{note}
If the set $A$ is not compact then the above result may not be true. 
\end{note}
\begin{thm}
If a sequence $ x=\left\{x_{k}\right\}_{k\in\mathbb N}$ has a bounded $I$-non-thin subsequence, then the set $\Gamma_x^S(I)$ is a non-empty closed set. 
\end{thm}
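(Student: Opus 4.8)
I would prove the two assertions separately: closedness by showing the complement is open, and non-emptiness by an $I$-statistical version of the Bolzano--Weierstrass bisection argument.

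To see that $\Gamma_x^S(I)$ is closed, suppose $y \notin \Gamma_x^S(I)$; then there is $\varepsilon > 0$ with $d_I(\{k : |x_k - y| < \varepsilon\}) = 0$. For any $z$ with $|z - y| < \varepsilon/2$ we have $\{k : |x_k - z| < \varepsilon/2\} \subseteq \{k : |x_k - y| < \varepsilon\}$, and a subset of a set of $I$-asymptotic density zero again has $I$-asymptotic density zero, because $B \subseteq C$ gives $\frac{|B(n)|}{n} \le \frac{|C(n)|}{n}$ and hence $\{n : \frac{|B(n)|}{n} \ge \delta\} \subseteq \{n : \frac{|C(n)|}{n} \ge \delta\} \in I$ for every $\delta > 0$. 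Thus $z \notin \Gamma_x^S(I)$, the complement of $\Gamma_x^S(I)$ is open, and $\Gamma_x^S(I)$ is closed; this part does not use the hypothesis.

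For non-emptiness, the hypothesis gives a set $A \subseteq \mathbb{N}$ with $d_I(A) \ne 0$ and a compact interval $[a_0, b_0]$ with $x_k \in [a_0, b_0]$ for all $k \in A$. I would construct inductively nested closed intervals $[a_0, b_0] \supseteq [a_1, b_1] \supseteq \cdots$ with $b_n - a_n = (b_0 - a_0)/2^n$ and sets $A = A_0 \supseteq A_1 \supseteq \cdots$ such that $d_I(A_n) \ne 0$ and $x_k \in [a_n, b_n]$ for all $k \in A_n$. The inductive step rests on the remark that if $B = B' \cup B''$ with $d_I(B') = 0 = d_I(B'')$ then $d_I(B) = 0$: indeed $0 \le \frac{|B(n)|}{n} \le \frac{|B'(n)|}{n} + \frac{|B''(n)|}{n}$, the right-hand side $I$-converges to $0$ by additivity of the $I$-limit, and applying the subset observation above to the sets $\{n : \frac{|B(n)|}{n} \ge \delta\}$ shows $\frac{|B(n)|}{n}$ $I$-converges to $0$. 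Applying this with $B = A_n$, $B' = \{k \in A_n : a_n \le x_k \le (a_n + b_n)/2\}$ and $B'' = \{k \in A_n : (a_n + b_n)/2 \le x_k \le b_n\}$, so that $A_n = B' \cup B''$: since $d_I(A_n) \ne 0$ means $A_n$ is not of $I$-asymptotic density zero, at least one of $B', B''$ is not of $I$-asymptotic density zero, and I let $A_{n+1}$ be such a half with $[a_{n+1}, b_{n+1}]$ the corresponding closed half-interval.

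Finally, $\bigcap_{n} [a_n, b_n] = \{y\}$ for a unique $y \in \mathbb{R}$, and I would check that $y \in \Gamma_x^S(I)$: given $\varepsilon > 0$, pick $n$ with $b_n - a_n < \varepsilon$, so that, as $y \in [a_n, b_n]$, every point of $[a_n, b_n]$ is within $\varepsilon$ of $y$, whence $A_n \subseteq \{k : x_k \in [a_n, b_n]\} \subseteq \{k : |x_k - y| < \varepsilon\}$; since a superset of a set not of $I$-asymptotic density zero is itself not of $I$-asymptotic density zero (else that subset would be), $\{k : |x_k - y| < \varepsilon\}$ is not of $I$-asymptotic density zero, and as $\varepsilon$ was arbitrary this gives $y \in \Gamma_x^S(I)$, so $\Gamma_x^S(I) \ne \emptyset$. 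The one point requiring care, and the main obstacle, is the bookkeeping around the convention that ``$d_I(\cdot) \ne 0$'' includes the case in which the $I$-asymptotic density fails to exist: the bisection step has to be phrased so as not to presume $A_n$ has a density, which is why it is run through the implication ``both halves of $I$-density zero $\Rightarrow$ whole of $I$-density zero'' rather than by adding density values; everything else is the routine $I$-analogue of the classical argument.
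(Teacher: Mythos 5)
Your proposal is correct, and the closedness half is essentially the paper's argument in contrapositive form: the paper takes a limit point $\xi$ of $\Gamma_x^S(I)$, picks $\beta\in\Gamma_x^S(I)$ close to $\xi$ and a ball $B_{\epsilon'}(\beta)\subset B_{\varepsilon}(\xi)$, while you show the complement is open; both rest on the same monotonicity fact that a subset of a set of $I$-asymptotic density zero again has $I$-asymptotic density zero. The non-emptiness half, however, is genuinely different. The paper deduces it from its Theorem 3.3: if $\Gamma_x^S(I)=\emptyset$, then the compact set $A$ containing the bounded $I$-nonthin subsequence satisfies $A\cap\Gamma_x^S(I)=\emptyset$, so $d_I(\{k:x_k\in A\})=0$ by that theorem (whose proof uses a finite subcover and finite subadditivity of $I$-limits), contradicting $d_I(P)\neq 0$ for the index set $P$ of the subsequence. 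You instead give a self-contained bisection (Bolzano--Weierstrass) construction, repeatedly halving $[a_0,b_0]$ and keeping a half whose index set is not of $I$-asymptotic density zero, using only that subsets and finite unions of $I$-density-zero sets are $I$-density-zero, and you correctly phrase the halving step through the implication ``both halves of density zero $\Rightarrow$ whole of density zero'' so as never to assume the densities exist. The paper's route is shorter once Theorem 3.3 is available and reuses that machinery (which is also needed later, e.g.\ in Theorem 3.6); your route is more elementary and makes transparent that non-emptiness of $\Gamma_x^S(I)$ for such sequences needs no auxiliary covering theorem and no APIO-type hypothesis, in contrast to the APIO assumption the paper invokes for its Bolzano--Weierstrass analogue in Section 5.
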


\begin{proof}
Let $ x=\left\{x_{k_q}\right\}_{q\in \mathbb N}$ is a bounded $I$-non-thin subsequence of $x$ and $A$ be a compact set such that $x_{k_q}\in A$ for each $q\in \mathbb N $. Let $P=\left\{k_q:q\in\mathbb N\right\}$. Clearly $d_I(P)\neq 0$. Now if $\Gamma_x^S(I)=\emptyset$, then $A\cap\Gamma_x^S(I)=\emptyset$ and so by Theorem 3.3 we have 
\begin{center}
$d_I(\{k:x_k\in A\})=0$.
\end{center}
But 
\begin{center}
$\left|\left\{k:k\leq n,~ k\in P\right\}\right|\leq \left|\left\{k:k\leq n,~x_k\in A\right\}\right|$,
\end{center}
which implies that $d_I(P)=0$, which is a contradiction. So $\Gamma_x^S(I)\neq \emptyset$.

 Now to show that $\Gamma_x^S(I)$ is closed, let $\xi$ be a limit point of $\Gamma_x^S(I)$. Then for every $\varepsilon>0$ we have $B_\varepsilon(\xi)\cap (\Gamma_x^S(I)\setminus\{\xi\})\neq\emptyset$. Let $\beta \in B_\varepsilon(\xi)\cap (\Gamma_x^S(I)\setminus\{\xi\})$. Now we can choose $\epsilon'>0$ such that $B_{\epsilon'}(\beta)\subset B_\varepsilon(\xi).$ Since $\beta\in \Gamma_x^S(I)$ so 
 \begin{center}
 $d_I(\{k:|x_k-\beta|<\epsilon'\})\neq\emptyset$
 \end{center}

\begin{center}
$\Rightarrow d_I(\{k:|x_k-\xi|<\varepsilon\})\neq\emptyset$.
\end{center}
Hence $\xi\in\Gamma_x^S(I)$.
\end{proof}
\begin{defn}
(a) A sequence $ x=\left\{x_{k}\right\}_{k\in\mathbb N}$ of real numbers is said to be $ I $- statistically bounded above if, there exists  $L\in\mathbb R$ such that $d_{I} (\{ k \in \mathbb{N}: x_k> L \}) = 0 $.\\
(b) A sequence $ x=\left\{x_{k}\right\}_{k\in\mathbb N}$ of real numbers is said to be $ I $- statistically bounded below if, there exists  $l\in\mathbb R$ such that $d_{I} (\{ k \in \mathbb{N}: x_k< l \}) = 0 $.
\end{defn}
\begin{defn}
A sequence $ x=\left\{x_{k}\right\}_{k\in\mathbb N}$ of real numbers is said to be $ I $- statistically bounded if, there exists  $l > 0$ such that for any $\delta > 0 $, the set 
\begin{center}
$A = \{ n \in \mathbb{N}: \frac{1}{n} |\{ k \in \mathbb{N}: k \leq n, |x_k| > l \}| \geq \delta \} \in I $ 
\end{center}
i.e. $d_{I} (\{ k \in \mathbb{N}: |x_k|> l \}) = 0 $.\\
Equivalently, $ x = \{x_k\}_{k \in \mathbb{N}} $ is said to be $I$-statistically bounded if, there exists a compact set $C$ in $\mathbb R$ such that for any $ \delta > 0 $, the set $ A = \{n\in\mathbb{N}: \frac{1}{n}|\{ k \in\mathbb{N}:k \leq n, x_{k}\notin C \}| \geq \delta\}\in I $ i.e., $d_I(\{ k \in\mathbb{N}: x_{k}\notin C \})=0.$
\end{defn}
 \begin{note}
If $I=I_{fin}=\{A\subset \mathbb N: \left|A\right|<\infty\}$, then the notion of $I$-statistical boundedness coincide with the notion of statistical boundedness.
\end{note}
\begin{cor}
If $ x=\left\{x_{k}\right\}_{k\in\mathbb N}$ is $I$-statistically bounded. Then the set $\Gamma_x^S(I)$ is non empty and compact.
\end{cor}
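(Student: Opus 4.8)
The plan is to deduce this from Theorem 3.4 together with a short containment argument. By the definition of $I$-statistical boundedness there is a compact set $C\subset\mathbb R$ with $d_I(\{k\in\mathbb N:x_k\notin C\})=0$. I would proceed in two stages: first produce a bounded $I$-nonthin subsequence of $x$, so that Theorem 3.4 immediately gives that $\Gamma_x^S(I)$ is a non-empty closed set; then show $\Gamma_x^S(I)\subseteq C$, so that $\Gamma_x^S(I)$, being a closed subset of the compact set $C$, is itself compact.

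For the first stage, set $P=\{k\in\mathbb N:x_k\in C\}$. Since $P$ is the complement of the set $\{k:x_k\notin C\}$, which has $I$-asymptotic density zero, a direct computation with $|P(n)|=n-|\{k\le n:x_k\notin C\}|$ shows $d_I(P)=1$; in particular $d_I(P)\neq 0$, so $\{x\}_P$ is an $I$-nonthin subsequence of $x$, and it is bounded because $x_k\in C$ for every $k\in P$ and $C$ is bounded. Thus the hypothesis of Theorem 3.4 is met, and we conclude that $\Gamma_x^S(I)$ is non-empty and closed.

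For the containment, suppose $\gamma\notin C$. Since $C$ is closed there is $\varepsilon>0$ with $B_\varepsilon(\gamma)\cap C=\emptyset$, hence $\{k\in\mathbb N:|x_k-\gamma|<\varepsilon\}\subseteq\{k\in\mathbb N:x_k\notin C\}$. The right-hand set lies in $I$ in the relevant density sense, and since a subset of a set in $I$ is in $I$, monotonicity of $d_I$ gives $d_I(\{k:|x_k-\gamma|<\varepsilon\})=0$, so $\gamma\notin\Gamma_x^S(I)$. Therefore $\Gamma_x^S(I)\subseteq C$, and combined with the previous stage this yields that $\Gamma_x^S(I)$ is non-empty and compact.

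I do not expect a genuine obstacle here, since the analytical content is already packaged in Theorems 3.3 and 3.4; the step that needs the most care is the bridge from $I$-statistical boundedness to the hypothesis of Theorem 3.4, namely verifying that the complement of an $I$-asymptotic-density-zero set is $I$-nonthin (indeed has $I$-density one) and that this complement furnishes a \emph{bounded} subsequence. The monotonicity of $d_I$ used in the containment step is equally routine, following directly from the ideal axioms.
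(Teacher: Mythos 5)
Your proposal is correct and follows essentially the route the paper intends: $I$-statistical boundedness yields a compact $C$ whose preimage complement has $I$-density one, so $\{x\}_P$ with $P=\{k:x_k\in C\}$ is a bounded $I$-nonthin subsequence and Theorem 3.4 gives $\Gamma_x^S(I)$ non-empty and closed, while the containment $\Gamma_x^S(I)\subset C$ (which the paper itself invokes in the proof of Theorem 3.6) gives compactness. Your direct verification of that containment via monotonicity of $d_I$ is sound and matches the intended argument.
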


........................THEOREM 3.3................................
\begin{thm}
 Let $ x=\left\{x_{k}\right\}_{k\in\mathbb N}$ be an $I$-statistically bounded sequence. Then for every $\varepsilon>0$ the set 
 \begin{center}
$\left\{k:d(\Gamma_x^S(I),x_{k})\geq\varepsilon\right\}$ 
 \end{center}
has $I$-asymptotic density zero, where $d(\Gamma_x^S(I),x_{k})=inf_{y\in\Gamma_x^S(I)}|y-x_{k}|$ the distance from $x_{k}$ to the set $\Gamma_x^S(I)$.
\end{thm}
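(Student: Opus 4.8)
The plan is to deduce the statement from Theorem 3.3 after first discarding, up to an $I$-asymptotic density zero set of indices, those terms $x_k$ that escape to infinity. Write $\Gamma=\Gamma_x^S(I)$. Since $x$ is $I$-statistically bounded, Corollary 3.5 tells us that $\Gamma$ is non-empty (and compact), so the map $y\mapsto d(\Gamma,y)=\inf_{z\in\Gamma}|z-y|$ is a well-defined, finite, $1$-Lipschitz and hence continuous function on $\mathbb R$. Fix $\varepsilon>0$.

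The first step is to invoke $I$-statistical boundedness to pick a compact set $C\subset\mathbb R$ (which may be taken to be a closed interval) with $d_I(\{k:x_k\notin C\})=0$. Next I would put
\[
B=\{y\in C: d(\Gamma,y)\geq\varepsilon\}.
\]
Because $d(\Gamma,\cdot)$ is continuous, the set $\{y\in\mathbb R: d(\Gamma,y)\geq\varepsilon\}$ is closed, so $B$, being closed and bounded, is compact. Moreover $B\cap\Gamma=\emptyset$, since every $z\in\Gamma$ satisfies $d(\Gamma,z)=0<\varepsilon$ and therefore $z\notin B$. Hence Theorem 3.3 applies to the compact set $B$ and gives $d_I(\{k:x_k\in B\})=0$.

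To conclude, I would use the set-theoretic inclusion
\[
\{k: d(\Gamma,x_k)\geq\varepsilon\}\subseteq\{k:x_k\in B\}\cup\{k:x_k\notin C\},
\]
which holds because if $d(\Gamma,x_k)\geq\varepsilon$ then either $x_k\in C$, forcing $x_k\in B$, or else $x_k\notin C$. Both sets on the right-hand side have $I$-asymptotic density zero: the first by the application of Theorem 3.3 just made, the second by the choice of $C$. Using the additivity of $I$-limits (exactly as in the proof of Theorem 3.3), the union has $I$-asymptotic density zero, and since a subset of a set of $I$-asymptotic density zero again has $I$-asymptotic density zero, we obtain $d_I(\{k: d(\Gamma,x_k)\geq\varepsilon\})=0$, as required.

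The point that needs care — and the reason the result is not a literal special case of Theorem 3.3 — is that the natural candidate set $\{y\in\mathbb R: d(\Gamma,y)\geq\varepsilon\}$ is in general unbounded, so Theorem 3.3 cannot be applied to it directly. The role of the $I$-statistical boundedness hypothesis is precisely to let us throw away, modulo an $I$-asymptotic density zero set of indices, the $x_k$ lying outside a fixed compact $C$; after that cut, the surviving indices with $d(\Gamma,x_k)\geq\varepsilon$ correspond to terms in the compact set $B$, which is disjoint from $\Gamma$, and Theorem 3.3 finishes the job.
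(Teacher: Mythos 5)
Your proposal is correct and takes essentially the same route as the paper: the paper works with the same compact set $A=C\setminus B_{\varepsilon}(\Gamma_x^S(I))$ (identical to your $B$) and invokes Theorem 3.3, only phrased as a proof by contradiction instead of your direct application followed by the union/subset density estimate. Your direct version merely makes explicit the small bookkeeping (discarding the $I$-density zero set $\{k:x_k\notin C\}$) that the paper leaves implicit.
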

\begin{proof}
Let $C$ be a compact set such that $d_I(\left\{k:x_{k}\notin C\right\})=0$. Then by Corollary 3.5 we have $\Gamma_x^S(I)$ is non-empty and $\Gamma_x^S(I)\subset C$.

Now if possible let $d_I(\left\{k:d(\Gamma_x^S(I),x_{k})\geq\varepsilon'\right\})\neq 0$ for some $\varepsilon'>0$.

 Now we define $B_{\varepsilon'}(\Gamma_x^S(I))=\left\{y:d(\Gamma_x^S(I),y)<\varepsilon'\right\}$ and let $A=C\setminus B_{\varepsilon'}(\Gamma_x^S(I))$. Then $A$ is a compact set which contains an $I$- non-thin subsequence of $x$. Then by Theorem 3.3 $A\cap \Gamma_x^S(I)\neq\emptyset$, which is absurd, since $\Gamma_x^S(I)\subset B_{\varepsilon'}(\Gamma_x^S(I)$. Hence 
 \begin{center}
$d_I(\left\{k:d(\Gamma_x^S(I),x_{k})\geq\varepsilon\right\})=0$ 
 \end{center}
for every $\varepsilon>0$.
\end{proof}
\section{Condition APIO}

\begin{defn}\textbf{(Additive property for $I$-asymptotic density zero sets)}.
The $I$-asymptotic density $d_I$ is said to satisfy APIO if, given a collection of sets $\{A_j\}_{j\in\mathbb N}$ in $\mathbb N$ with $d_I(A_j)=0$, for each $j\in\mathbb N$, there exists a collection $\{B_j\}_{j\in\mathbb N}$ in $\mathbb N$ with the properties $\left|A_j\Delta B_j\right|<\infty$ for each $j\in\mathbb N$ and $d_I(B=\bigcup\limits_{j=1}^\infty B_j)=0$.
\end{defn}
\begin{thm}
A sequence $x=\{x_k\}_{k\in\mathbb N}$ of real number is $I$-statistically convergent to $l$ implies there exists a subset $B$ with $d_I(B)=1$ and $\underset{k\in B, k\rightarrow \infty}{lim}x_k=l$ if and only if $d_I$ has the property APIO.
\end{thm}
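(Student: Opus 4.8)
The plan is to prove the two implications separately; the forward implication is a short deduction from APIO, while the converse requires constructing a suitable test sequence from the cumulative unions of the given null sets.

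\textbf{($\Leftarrow$)} Assume $d_I$ has property APIO and that $I\text{-}st\text{-}\lim x = l$. For each $j\in\mathbb{N}$ set $A_j=\{k\in\mathbb{N}:|x_k-l|\ge 1/j\}$; by the definition of $I$-statistical convergence, $d_I(A_j)=0$ for every $j$. By APIO choose sets $B_j$ with $|A_j\Delta B_j|<\infty$ for each $j$ and $d_I(\bigcup_j B_j)=0$, and put $B=\mathbb{N}\setminus\bigcup_j B_j$. From $\frac{1}{n}|B(n)|=1-\frac{1}{n}|(\bigcup_j B_j)(n)|$ and the elementary properties of $I$-limits we get $d_I(B)=1$. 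To check $\lim_{k\in B,\,k\to\infty}x_k=l$, fix $\varepsilon>0$ and pick $j$ with $1/j<\varepsilon$: every $k\in B$ satisfies $k\notin B_j$, and since $A_j\setminus B_j$ is finite, all $k\in B$ beyond some threshold satisfy $k\notin A_j$, i.e. $|x_k-l|<1/j<\varepsilon$. This establishes the first implication.

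\textbf{($\Rightarrow$)} Assume now that for every real sequence and every limit the stated implication holds, and let $\{A_j\}_{j\in\mathbb{N}}$ be an arbitrary family with $d_I(A_j)=0$ for all $j$. Write $A_j'=A_1\cup\dots\cup A_j$; from $\frac{1}{n}|A_j'(n)|\le\sum_{i=1}^{j}\frac{1}{n}|A_i(n)|$ together with finite additivity and the squeeze property of $I$-limits, $d_I(A_j')=0$ for each $j$. Define a sequence $x$ by $x_k=1/j(k)$, where $j(k)=\min\{j:k\in A_j'\}$ when $k\in\bigcup_j A_j'$, and $x_k=0$ otherwise. For $\varepsilon>0$ one checks that $\{k:x_k\ge\varepsilon\}=A_m'$ with $m=\lfloor 1/\varepsilon\rfloor$, so $d_I(\{k:|x_k|\ge\varepsilon\})=0$ and hence $I\text{-}st\text{-}\lim x=0$. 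By hypothesis there is a set $B$ with $d_I(B)=1$ and $\lim_{k\in B,\,k\to\infty}x_k=0$; put $C=\mathbb{N}\setminus B$, so $d_I(C)=0$. For each $j$ choose $N_j$ with $x_k<1/j$ for all $k\in B$ with $k>N_j$; then such $k$ satisfy $j(k)>j$, hence $k\notin A_j'\supseteq A_j$, so $A_j\cap B\subseteq\{1,\dots,N_j\}$ is finite. Setting $B_j=A_j\cap C$, we get $A_j\Delta B_j=A_j\setminus C=A_j\cap B$, which is finite, while $\bigcup_j B_j\subseteq C$ forces $d_I(\bigcup_j B_j)=0$ by monotonicity of $d_I$ on null sets. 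Thus $d_I$ has APIO.

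The bookkeeping facts about $I$-limits invoked above — finite additivity of $d_I$ on null sets, monotonicity (if $E\subseteq F$ and $d_I(F)=0$ then $d_I(E)=0$), and $d_I(B)=1\iff d_I(\mathbb{N}\setminus B)=0$ — are all immediate from the algebra of $I$-convergent sequences and I would only cite them in passing. The point that needs genuine care, and which I expect to be the main obstacle, is in the converse: one must obtain $|A_j\Delta B_j|<\infty$ for the \emph{original} sets $A_j$, not merely for their cumulative unions $A_j'$. This is precisely why the test sequence is built from the $A_j'$ — so that convergence of $x$ along $B$ drives $j(k)\to\infty$ and thereby expels $k$ from \emph{all} earlier $A_i$ — while the correcting sets are taken as $B_j=A_j\cap C$ rather than $A_j'\cap C$.
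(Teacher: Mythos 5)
The paper states this theorem without any proof (no proof environment follows it), so there is no argument of the authors to compare yours against; judged on its own, your proof is correct and is the natural $I$-statistical adaptation of the classical Fridy/Connor-type argument linking a.e.-style decomposition to the additive property for density-zero sets. In the APIO direction, $A_j=\{k:|x_k-l|\ge 1/j\}$, $B=\mathbb{N}\setminus\bigcup_j B_j$ and the finiteness of $A_j\setminus B_j$ give exactly the required $d_I(B)=1$ and convergence along $B$. In the converse, your key devices work: building the test sequence on the cumulative unions $A_j'$ (so that $d_I(A_j')=0$ by finite subadditivity and convergence of $x$ along $B$ forces $j(k)\to\infty$, hence $A_j\cap B\subseteq\{1,\dots,N_j\}$), and then correcting with $B_j=A_j\cap C$, $C=\mathbb{N}\setminus B$, so that $A_j\Delta B_j=A_j\cap B$ is finite while $\bigcup_j B_j\subseteq C$ and monotonicity of $d_I$ on null sets give $d_I(\bigcup_j B_j)=0$; all the auxiliary facts you invoke (squeeze and linearity for $I$-limits, $d_I(B)=1\iff d_I(\mathbb{N}\setminus B)=0$) are indeed routine. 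The only cosmetic slip is the level-set identity $\{k:x_k\ge\varepsilon\}=A_{\lfloor 1/\varepsilon\rfloor}'$, which should be read with the convention $A_0'=\emptyset$ (for $\varepsilon>1$ the set is empty, and for $\varepsilon\le 1$ the identity holds as stated); this does not affect the argument.
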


\begin{thm}
Let $I$ be an ideal such that $d_I$ has the property APIO, then for any sequence $x=\{x_k\}_{k\in \mathbb N}$ of real numbers there exists a sequence $y=\{y_k\}_{k\in\mathbb N}$ such that $L_y=\Gamma_x^S(I)$ and the set $\{k:x_k\neq y_k\}$ has $I$-asymptotic density zero.
\end{thm}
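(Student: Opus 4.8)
The plan is to redefine $x$ on a set of indices of $I$-asymptotic density zero so as to destroy precisely the ordinary limit points that do not belong to $\Gamma_x^S(I)$, while retaining every point of $\Gamma_x^S(I)$ as an ordinary limit point. The basic observation is that whenever $q\in\mathbb{Q}$ and $\varepsilon\in\mathbb{Q}^{+}$ satisfy $d_I(\{k:|x_k-q|<\varepsilon\})=0$, the open ball $B_{\varepsilon}(q)$ is automatically disjoint from $\Gamma_x^S(I)$: if $\gamma\in\Gamma_x^S(I)\cap B_{\varepsilon}(q)$, then a small ball $B_{\sigma}(\gamma)\subseteq B_{\varepsilon}(q)$ would give $d_I(\{k:|x_k-\gamma|<\sigma\})=0$, contradicting $\gamma\in\Gamma_x^S(I)$. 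Conversely, by rational approximation every point of $U:=\mathbb{R}\setminus\Gamma_x^S(I)$ lies in such a ball (given $u\notin\Gamma_x^S(I)$, choose $\varepsilon(u)>0$ with $d_I(\{k:|x_k-u|<\varepsilon(u)\})=0$ and then rationals $q,\varepsilon$ with $|u-q|<\varepsilon$ and $\varepsilon+|u-q|<\varepsilon(u)$). Enumerate this countable family as $\{(q_j,\varepsilon_j)\}_{j\in\mathbb{N}}$ and set $A_j:=\{k\in\mathbb{N}:|x_k-q_j|<\varepsilon_j\}$, so that $d_I(A_j)=0$ for every $j$ and $\bigcup_{j}B_{\varepsilon_j}(q_j)=U$ (if $U=\emptyset$ the family is empty and we take $y=x$).

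Next I would invoke APIO on the sequence $\{A_j\}_{j\in\mathbb{N}}$ (padding with copies of $\emptyset$ if the family is finite) to obtain sets $B_j$ with $|A_j\Delta B_j|<\infty$ and $d_I(B)=0$, where $B:=\bigcup_{j=1}^{\infty}B_j$. Then define $y_k:=x_k$ for $k\notin B$, and for $k\in B$ put $y_k:=p$ for a fixed $p\in\Gamma_x^S(I)$ if $\Gamma_x^S(I)\neq\emptyset$, and $y_k:=k$ if $\Gamma_x^S(I)=\emptyset$. Since $\{k:x_k\neq y_k\}\subseteq B$, monotonicity of $d_I$ yields $d_I(\{k:x_k\neq y_k\})=0$, which is half of the assertion.

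Then I would establish $L_y=\Gamma_x^S(I)$ in two steps. For $\Gamma_x^S(I)\subseteq L_y$: if $\gamma\in\Gamma_x^S(I)$ and $\varepsilon>0$, then $d_I(\{k:|x_k-\gamma|<\varepsilon\})\neq 0$; since a finite union of $d_I$-null sets is $d_I$-null and $d_I(B)=0$, the set $\{k:|x_k-\gamma|<\varepsilon\}\setminus B$ still has non-zero $d_I$, hence is infinite, and $y_k=x_k$ on it, so $\gamma\in L_y$. For $L_y\subseteq\Gamma_x^S(I)$: suppose $\ell\in L_y$ but $\ell\notin\Gamma_x^S(I)$, so $\ell\in U$ and hence $\ell\in B_{\varepsilon_j}(q_j)$ for some $j$; choosing $\rho>0$ with $B_{\rho}(\ell)\subseteq B_{\varepsilon_j}(q_j)$, infinitely many $k$ satisfy $y_k\in B_{\varepsilon_j}(q_j)$. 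But for $k\notin B$ this forces $x_k\in B_{\varepsilon_j}(q_j)$, i.e. $k\in A_j$, whence $k\in A_j\setminus B_j$, a finite set; and for $k\in B$, either $y_k=p\notin B_{\varepsilon_j}(q_j)$ (when $\Gamma_x^S(I)\neq\emptyset$, since $B_{\varepsilon_j}(q_j)\cap\Gamma_x^S(I)=\emptyset$), or $y_k=k\in(q_j-\varepsilon_j,q_j+\varepsilon_j)$ for only finitely many $k$ (when $\Gamma_x^S(I)=\emptyset$). Thus $y_k\in B_{\varepsilon_j}(q_j)$ for only finitely many $k$, contradicting $\ell\in L_y$. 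Hence $L_y=\Gamma_x^S(I)$ and the proof is complete.

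The main obstacle is the bookkeeping in the first two paragraphs: one must organize the $d_I$-null neighbourhoods of the points of $U$ into a single countable family $\{A_j\}$ of null sets whose associated open balls still cover all of $U$, and then recognise that APIO is exactly the hypothesis which merges these countably many null sets into one null set $B$ on which $x$ may be altered. The finite symmetric differences $A_j\Delta B_j$ cause no trouble, because for a fixed $\ell\in U$ only one index $j$ is invoked and the exceptional set $A_j\setminus B_j$ is finite.
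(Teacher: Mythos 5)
Your proof is correct, and it is essentially the canonical argument here: the paper actually states Theorem 4.2 without supplying a proof, and your construction is precisely the expected $I$-analogue of Fridy's original proof — cover $\mathbb{R}\setminus\Gamma_x^S(I)$ by countably many rational balls whose index sets are $d_I$-null, merge them into a single null set $B$ via APIO, and redefine $x$ on $B$ (sending those terms to a fixed point of $\Gamma_x^S(I)$, or to $k$ when $\Gamma_x^S(I)=\emptyset$). The two directions $\Gamma_x^S(I)\subseteq L_y$ and $L_y\subseteq\Gamma_x^S(I)$ are handled correctly, including the degenerate cases.
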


\section{$I$-statistical analogous of Completeness Theorems }
In this section following the line of Fridy \cite{Fr2}, we formulate and prove an $I$-statistical analogue of the theorems concerning sequences that are equivalent to the completeness of the real line.

We first consider a sequential version of the least upper bound axiom (in $\mathbb R$), namely, Monotone sequence Theorem: every monotone increasing sequence of real numbers which is bounded above, is convergent. The following result is an $I$-statistical analogue of that Theorem.
\begin{thm}
Let $x=\{x_k\}_{k\in\mathbb N}$ be a sequence of real numbers and $M=\{k:x_k\leq x_{k+1}\}$. If $d_I(M)=1$ and $x$ is bounded above on $M$, then $x$ is $I$-statistically convergent.
\end{thm}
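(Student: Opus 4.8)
The plan is to follow the pattern of Section 3: show that the hypotheses force the $I$-statistical cluster set $\Gamma_x^S(I)$ to consist of a single point, and then read off $I$-statistical convergence from Theorem 3.6. Write $D=\mathbb N\setminus M$, so that $d_I(D)=0$. If $D$ is finite, then from some index on $x$ is genuinely non-decreasing and bounded above, hence convergent in the ordinary sense and therefore $I$-statistically convergent; thus assume $D=\{d_1<d_2<\cdots\}$ is infinite and put $d_0=0$. The structural content of $d_I(M)=1$ is twofold: on each run $R_i=\{d_{i-1}+1,\dots,d_i\}$ the sequence is non-decreasing, $x_{d_{i-1}+1}\le\cdots\le x_{d_i}$ (no index strictly inside a run lies in $D$), and the number of runs meeting $\{1,\dots,n\}$ is $I$-negligible against $n$. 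I will also use the iterated form: for each fixed $j$ the set $M^{(j)}:=M\cap(M-1)\cap\cdots\cap(M-j+1)$ still has $d_I(M^{(j)})=1$ (a bounded shift preserves $I$-density zero) and $x_k\le x_{k+1}\le\cdots\le x_{k+j}$ for $k\in M^{(j)}$. Note that $d_I(M)=1$ does not make the subsequence $(x_k)_{k\in M}$ itself non-decreasing, so one cannot simply take $L$ to be its supremum; this is why the argument is routed through $\Gamma_x^S(I)$.

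Next I would upgrade the one-sided bound to full $I$-statistical boundedness. If $x\le G$ on $M$, then $\{k:x_k>G\}\subset D$, so $d_I(\{k:x_k>G\})=0$; for the lower side one argues from the run structure that the values cannot drift off to $-\infty$ on an $I$-non-negligible set, producing some $\ell$ with $d_I(\{k:x_k<\ell\})=0$. Then $x$ is $I$-statistically bounded, so Corollary 3.5 makes $\Gamma_x^S(I)$ non-empty and compact, and we may put $L:=\max\Gamma_x^S(I)$. The crux is the claim $\Gamma_x^S(I)=\{L\}$. Suppose instead $\gamma\in\Gamma_x^S(I)$ with $\gamma<L$, pick $\varepsilon>0$ with $\gamma+2\varepsilon<L-2\varepsilon$, and set $A=\{k:x_k<\gamma+\varepsilon\}$ and $B=\{k:x_k>L-\varepsilon\}$; since $\gamma,L\in\Gamma_x^S(I)$, neither $A$ nor $B$ has $I$-density zero (so both have positive upper $I$-density). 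Because $x$ is non-decreasing on each run, within a single run every index of $A$ precedes every index of $B$, so between an index of $B$ and any later index of $A$ there must lie a descent, and the iterated monotonicity forbids a single long monotone stretch from both dipping below $\gamma+\varepsilon$ and later rising above $L-\varepsilon$. One then wants to convert ``many elements of $A$ and of $B$, each high--low pair forcing a fresh descent'' into a lower bound for $|D(n)|/n$ that contradicts $d_I(D)=0$.

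That last counting step is the part I expect to be the main obstacle: the positive upper $I$-densities of $A$ and $B$, the run structure, and the iterated monotonicity must be combined into a genuine quantitative contradiction with $d_I(D)=0$, and this is where the whole strength of $d_I(M)=1$ is spent; it will require a careful accounting of how the elements of $A$ and $B$ are apportioned among the runs and of the averaged run lengths forced by $d_I(D)=0$. Once $\Gamma_x^S(I)=\{L\}$ is in hand the rest is immediate: then $d(\Gamma_x^S(I),x_k)=|x_k-L|$, so Theorem 3.6 (applicable since $x$ is $I$-statistically bounded) gives $d_I(\{k:|x_k-L|\ge\varepsilon\})=0$ for every $\varepsilon>0$, i.e.\ $I\mbox{-}st\mbox{-}\lim x=L$, which is the assertion.
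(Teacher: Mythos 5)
Your plan never closes, and the two places where it is left open are genuine gaps, not technicalities. First, the promised upgrade to full $I$-statistical boundedness is not available from the hypotheses: an upper bound on $M$ gives $d_I(\{k:x_k>G\})=0$ since $\{k:x_k>G\}\subset\mathbb N\setminus M$, but nothing prevents the values from drifting to $-\infty$ on a set of full density. Take blocks of rapidly increasing length $L_j$ on which $x$ rises from $-j$ to $0$ in equal steps; then $\mathbb N\setminus M$ consists only of the block endpoints, so $d_I(M)=1$ and $x\le 0$ on $M$, yet for every $G$ the set $\{k:x_k<-G\}$ has positive upper natural density, so already for $I=I_{fin}$ it does not have $I$-asymptotic density zero. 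Second -- and this is the step you yourself flagged as the main obstacle -- the central claim $\Gamma_x^S(I)=\{L\}$ cannot be obtained by any counting of descents from the literal hypotheses: let $x$ run through $1/L_j,2/L_j,\dots,1$ on the $j$-th block with $L_j=2^j$. Again only the block endpoints lie outside $M$, so $d_I(M)=1$ and $x\le 1$ on $M$, but the values equidistribute over $(0,1]$, and for $I=I_{fin}$ every point of $[0,1]$ is an $I$-statistical cluster point and the sequence is not statistically convergent. So under the literal reading of $M=\{k:x_k\le x_{k+1}\}$ that you adopt, the singleton claim (and the conclusion you are aiming at) simply fails, and no apportioning of the sets $A$, $B$ among the runs can repair it.

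This is also exactly where your proposal parts company with the paper's argument, which does not go through $\Gamma_x^S(I)$ at all. The paper's proof is a short least-upper-bound argument: it sets $l=\sup\{x_k:k\in M\}$, picks $k_0\in M$ with $x_{k_0}>l-\varepsilon$, and then asserts $l-\varepsilon<x_{k_0}\le x_k<l+\varepsilon$ for every $k\in M$ with $k>k_0$, whence $M\cap\{k:k>k_0\}\subset\{k:|x_k-l|<\varepsilon\}$ and $d_I(\{k:|x_k-l|\ge\varepsilon\})=0$. The inequality $x_{k_0}\le x_k$ is precisely the monotonicity of $x$ along $M$, which, as you correctly note, is not a consequence of $d_I(M)=1$ alone; the sawtooth above violates it. In other words, the paper's proof implicitly reads the hypothesis as ``$x$ is non-decreasing along a set $M$ with $d_I(M)=1$''; under that reading the two-line sup argument works and your detour through Corollary 3.5 and Theorem 3.6 is unnecessary, while under your literal reading the statement itself breaks down. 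What is missing in your attempt is therefore not a cleverer quantitative count of descents but the stronger monotonicity hypothesis that the short direct proof tacitly uses.
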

\begin{proof}
Since $x$ is bounded above on $M$, so let $l$ be the least upper bound of the range of $\{x_k \}_{k \in M} $. Then we have \\
(i) $x_k\leq l$, $\forall k\in M$\\
(ii) for a pre-assigned $\varepsilon>0$, there exists a natural number $k_0\in M$ such that $x_{k_0}> l-\varepsilon$.\\
Now let $k\in M$ and $k>k_0$. Then $l-\varepsilon<x_{k_0}\leq x_k<l+\varepsilon$. Thus $M\cap\{k:k>k_0\}\subset \{k:l-\varepsilon<x_k<l+\varepsilon\}$. Since the set on the left hand side of the inclusion is of $I$-asymptotic density 1, we have $d_I(\{k:l-\varepsilon<x_k<l+\varepsilon\})=1$ i.e., $d_I(\{k:\left|x_k-l\right|\geq \varepsilon\})=0$. Hence $x$ is $I$-statistically convergent to $l$.
\end{proof}
\begin{thm}
Let $x=\{x_k\}_{k\in\mathbb N}$ be a sequence of real numbers and $M=\{k:x_k\geq x_{k+1}\}$. If $d_I(M)=1$ and $x$ is bounded below on $M$, then $x$ is $I$-statistically convergent.
\end{thm}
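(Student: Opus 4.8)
The plan is to reduce this statement to the preceding theorem (the monotone increasing case) by passing to the negated sequence. Put $y=\{y_k\}_{k\in\mathbb N}$ with $y_k=-x_k$ and let $M'=\{k:y_k\leq y_{k+1}\}$. Since $y_k\leq y_{k+1}$ is equivalent to $-x_k\leq -x_{k+1}$, i.e.\ to $x_k\geq x_{k+1}$, we have $M'=M$, so $d_I(M')=1$. Moreover, as $x$ is bounded below on $M$ there is a real number $c$ with $x_k\geq c$ for all $k\in M$, whence $y_k=-x_k\leq -c$ for all $k\in M'$; thus $y$ is bounded above on $M'$. The preceding theorem applies to $y$ and yields an $l\in\mathbb R$ with ${I\mbox{-}st\mbox{-}\lim}\ y=l$.

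It then remains to observe that $I$-statistical convergence is preserved under negation. For every $\varepsilon>0$ and $\delta>0$ one has $\{k\leq n:|x_k-(-l)|\geq\varepsilon\}=\{k\leq n:|y_k-l|\geq\varepsilon\}$, so
\[
\Bigl\{n\in\mathbb N:\tfrac1n\bigl|\{k\leq n:|x_k+l|\geq\varepsilon\}\bigr|\geq\delta\Bigr\}=\Bigl\{n\in\mathbb N:\tfrac1n\bigl|\{k\leq n:|y_k-l|\geq\varepsilon\}\bigr|\geq\delta\Bigr\}\in I,
\]
and therefore ${I\mbox{-}st\mbox{-}\lim}\ x=-l$; in particular $x$ is $I$-statistically convergent.

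Alternatively one can argue directly, mirroring the proof of the increasing case: take $l=\inf\{x_k:k\in M\}$, which exists since $x$ is bounded below on $M$; fix $\varepsilon>0$ and choose $k_0\in M$ with $x_{k_0}<l+\varepsilon$; then for $k\in M$ with $k>k_0$ one gets $l-\varepsilon<l\leq x_k\leq x_{k_0}<l+\varepsilon$, so that $M\cap\{k:k>k_0\}\subset\{k:|x_k-l|<\varepsilon\}$. Since the left-hand set has $I$-asymptotic density $1$ (it differs from $M$ by a finite set), the same holds for the right-hand set, and hence $d_I(\{k:|x_k-l|\geq\varepsilon\})=0$.

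There is no genuine obstacle here: the statement is the exact dual of the preceding theorem and the substitution $x\mapsto -x$ is purely formal, which is why I would present the reduction as the main argument. The only step needing a word of care is the monotonicity estimate $x_k\leq x_{k_0}$ for $k,k_0\in M$ with $k>k_0$; this is precisely the delicate point already used in the proof of the increasing case, and the reduction argument inherits it rather than re-deriving it.
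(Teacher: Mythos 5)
Your proposal is correct, and its main line differs in form from the paper's: the paper disposes of this theorem with the single remark that the proof is similar to that of Theorem 5.1 (i.e.\ repeat that argument with the infimum in place of the supremum and all inequalities reversed), which is exactly your ``alternative'' direct argument, whereas your principal argument deduces the statement from Theorem 5.1 itself by passing to $y_k=-x_k$, checking that $\{k:y_k\le y_{k+1}\}=M$ and that boundedness below of $x$ on $M$ becomes boundedness above of $y$ on $M$, and then transferring the conclusion back via the identity $\{k\le n:|x_k+l|\ge\varepsilon\}=\{k\le n:|y_k-l|\ge\varepsilon\}$. All of these steps are sound, and the reduction buys something real: it avoids re-deriving the estimate and concentrates the analytic content of both theorems in a single proof.

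On the point you flag as delicate, your caution is warranted and worth making explicit. For $k_0<k$ with $k_0,k\in M$, the inequality $x_k\le x_{k_0}$ (or $x_{k_0}\le x_k$ in the increasing case) does not follow from the definition $M=\{k:x_k\ge x_{k+1}\}$ together with $d_I(M)=1$ alone, because the integers strictly between $k_0$ and $k$ need not belong to $M$, so the chain of one-step inequalities can break there; what the argument genuinely uses is that the subsequence $\{x_k\}_{k\in M}$ is monotone. This is an issue with the paper's own proof of Theorem 5.1, which its omitted proof of the present theorem inherits exactly as your direct variant does; your reduction has the merit of confining that issue to Theorem 5.1 as already stated rather than reproducing it, but it does not remove it.
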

\begin{proof}
The proof is similar to that of Theorem 5.1 and so is omitted.
\end{proof}

\begin{note}

(a) In the Theorem 5.1 if we replace the criteria that `$x$ is bounded above on $M$' by `$x$ is $I$-statistically bounded above on $M$' then the result still holds. Indeed if $x$ is a $I$-statistically bounded above on $M$, then there exists $l \in \mathbb{R}$ such that $d_I(\{k\in M: x_k>l\})=0$ i.e., $d_I(\{k\in M:x_k\leq l\})=1$. Let $S=\{k\in M:x_k\leq l\}$ and $l^{'} = sup \{ x_k: k \in S \}$. Then 

(i) $x_k \leq l^{'} $ for all $k \in S$

(ii) for any $\varepsilon > 0 $, there exists a natural number $k_0 \in S $ such that $x_{k_0} > l^{'} - \varepsilon $.
 Then proceeding in a similar way as in Theorem 5.1 we get the result. 

(b) Similarly, In the Theorem 5.2 if we replace the criteria that `$x$ is bounded below on $M$' by `$x$ is $I$-statistically bounded below on $M$' then the result still holds.
\end{note}

Another completeness result for $\mathbb{R}$ is the Bolzano-Weierstrass Theorem, which tells us that, every bounded sequence of real numbers has a cluster point. The following result is an $I$-statistical analogue of that result.
\begin{thm}
Let $I$ be an ideal such that $d_I$ has the property APIO. Let $x=\{x_k\}_{k\in\mathbb N}$ be a sequence of real numbers. If $x$ has a bounded $I$-nonthin subsequence,
then $ x$ has an $I$-statistical cluster point.
\end{thm}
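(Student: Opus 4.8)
The plan is to deduce the theorem from the classical Bolzano--Weierstrass theorem on $\mathbb R$ together with Theorem 4.2. Before doing so, I note that the conclusion ``$x$ has an $I$-statistical cluster point'' is exactly the assertion $\Gamma_x^S(I)\neq\emptyset$, which is already contained in Theorem 3.4 (and whose proof there uses no additional hypothesis on $I$). So the statement is true even without APIO; the point of placing it in this section is to mimic the completeness-theorem argument of Fridy, and that is the route I would write out.

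Let $\{x\}_P$, with $P=\{k_q:q\in\mathbb N\}$, be a bounded $I$-nonthin subsequence of $x$; thus $d_I(P)\neq 0$ and there is $M>0$ with $|x_k|\le M$ for all $k\in P$. Since $d_I$ has the property APIO, apply Theorem 4.2 to the sequence $x$ to obtain a sequence $y=\{y_k\}_{k\in\mathbb N}$ with $L_y=\Gamma_x^S(I)$ and $d_I(D)=0$, where $D=\{k:x_k\neq y_k\}$. Set $P'=P\setminus D$. Then $P=P'\cup(P\cap D)$ is a disjoint union, so $\frac{|P(n)|}{n}=\frac{|P'(n)|}{n}+\frac{|(P\cap D)(n)|}{n}$, and since $P\cap D\subseteq D$ we have $d_I(P\cap D)=0$. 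If $d_I(P')$ were also $0$, the additivity of the $I$-limit would give $d_I(P)=0$, contradicting that $P$ is $I$-nonthin; hence $d_I(P')\neq 0$, and in particular $P'$ is an infinite set.

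Now $\{y\}_{P'}$ is a subsequence of $y$, and because $y_k=x_k$ for $k\in P'$ while $P'\subseteq P$, we have $|y_k|\le M$ for all $k\in P'$; thus $\{y\}_{P'}$ is a bounded sequence of real numbers. By the ordinary Bolzano--Weierstrass theorem it admits a convergent sub-subsequence, whose limit $\ell$ is an ordinary limit point of $y$, so $\ell\in L_y$. Consequently $L_y\neq\emptyset$, and since $\Gamma_x^S(I)=L_y$ we conclude that $\Gamma_x^S(I)\neq\emptyset$, i.e. $x$ has an $I$-statistical cluster point.

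I do not expect any serious obstacle here; the only things to check carefully are the elementary monotonicity and finite additivity of $d_I$ used to pass from $d_I(P)\neq 0$ to $d_I(P')\neq 0$ (both inherited at once from the corresponding properties of $I$-limits), and the remark that Corollary 3.5 is not directly applicable, since the hypothesis supplies only a bounded nonthin subsequence and not $I$-statistical boundedness of $x$ --- this gap is bridged precisely by Theorem 4.2, which is where APIO enters. For completeness one could instead give an APIO-free bisection proof: starting from an interval containing $\{x\}_P$, repeatedly pass to a half-interval whose preimage inside $P$ still has nonzero $I$-density (possible by finite additivity of $d_I$), and take the unique point common to the resulting nested intervals; monotonicity of $d_I$ then shows this point lies in $\Gamma_x^S(I)$.
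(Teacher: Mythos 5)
Your proof is correct and follows essentially the same route as the paper: apply Theorem 4.2 to obtain $y$ with $L_y=\Gamma_x^S(I)$ and $d_I(\{k:x_k\neq y_k\})=0$, observe that the bounded $I$-nonthin subsequence survives (up to an $I$-density-zero set) as a bounded subsequence of $y$, and invoke the Bolzano--Weierstrass theorem to get $L_y\neq\emptyset$. Your added care in checking $d_I(P\setminus D)\neq 0$ via finite additivity, and your side remark that Theorem 3.4 already gives the conclusion without APIO, are both sound but do not change the argument.
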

\begin{proof}
Using Theorem 4.2, we have a sequence $y=\{y_k\}_{k\in\mathbb N}$ such that $L_y=\Gamma_x^S(I)$ and $d_I(\{k:x_k=y_k\})=1$. Let $\{x\}_K$ be the bounded $I$-nonthin subsequence of $x$. Then $d_I(\{k:x_k=y_k\}\cap K)\neq 0$. Thus $y$ has a bounded  $I$-nonthin subsequence and hence by  Bolzano-Weierstrass Theorem, $L_y\neq \emptyset$. Thus $\Gamma_x^S(I)\neq\emptyset$. 
\end{proof}
\begin{cor}
Let $I$ be an ideal such that $d_I$ has the property APIO. If $x$ is a bounded sequence of real numbers, then $x$ has an $I$-statistical cluster point.
\end{cor}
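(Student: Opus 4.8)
The plan is to deduce this immediately from Theorem 5.3 by exhibiting the whole sequence as its own bounded $I$-nonthin subsequence. Since $x=\{x_k\}_{k\in\mathbb N}$ is bounded, there is a real number $G>0$ with $|x_k|\leq G$ for all $k$, so the compact set $C=[-G,G]$ contains the entire range of $x$. Taking $K=\mathbb N$, the subsequence $\{x\}_K$ is just $x$ itself, it is bounded, and $d_I(K)=d_I(\mathbb N)=1$ (the constant sequence $\frac{|K(n)|}{n}=1$ is $I$-statistically, indeed ordinarily, convergent to $1$, hence by the Note following Definition 3.1 its $I$-asymptotic density is $1$). In particular $d_I(K)\neq 0$, so $\{x\}_K$ is a bounded $I$-nonthin subsequence of $x$.

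With this observation in hand, the hypotheses of Theorem 5.3 are met: $I$ is an ideal such that $d_I$ has the property APIO, and $x$ has a bounded $I$-nonthin subsequence. I would then simply invoke Theorem 5.3 to conclude that $x$ has an $I$-statistical cluster point, i.e. $\Gamma_x^S(I)\neq\emptyset$. (Alternatively, one could bypass APIO and appeal directly to Theorem 3.4, which already states that a sequence with a bounded $I$-nonthin subsequence has a non-empty closed set $\Gamma_x^S(I)$; but since the corollary is placed after Theorem 5.3 and the paper wants the Bolzano--Weierstrass framing, routing through Theorem 5.3 is the natural choice.)

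There is essentially no obstacle here; the only point requiring a word of justification is that $d_I(\mathbb N)=1$ for every non-trivial admissible ideal $I$, which is exactly the content of the Note after Definition 3.1. Everything else is a one-line application of the preceding theorem, so the write-up should be just two or three sentences.
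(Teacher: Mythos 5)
Your proposal is correct and is exactly the intended derivation: since a bounded sequence is its own bounded $I$-nonthin subsequence (as $d_I(\mathbb N)=1\neq 0$), Theorem 5.3 applies directly to give $\Gamma_x^S(I)\neq\emptyset$.
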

 The next result is an $I$-statistical analogue of the Heine-B$\ddot{o}$rel Covering Theorem.
\begin{thm}
Let $I$ be an ideal such that $d_I$ has the property APIO. Let $x$ be a bounded sequence of real numbers, then it has an $I$-thin subsequence $\{x\}_B$ such that $\{x_k:k\in \mathbb N\setminus B\}\cup \Gamma_x^S(I)$ is a compact set.
\end{thm}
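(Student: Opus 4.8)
The plan is to exploit Theorem 4.2 to transfer the problem to an ordinary sequence and then invoke the Heine--Borel Theorem directly. Since $d_I$ has property APIO, Theorem 4.2 gives us a sequence $y=\{y_k\}_{k\in\mathbb N}$ with $L_y=\Gamma_x^S(I)$ and $d_I(\{k:x_k\neq y_k\})=0$. First I would argue that $y$ is a bounded sequence: indeed, $x$ is bounded, so $\{x_k:k\in\mathbb N\}$ lies in some interval $[-G,G]$, and since $\{k:x_k\neq y_k\}$ has $I$-asymptotic density zero it is in particular an infinite-complement set, so $\Gamma_x^S(I)=L_y$ is a non-empty bounded set; moreover one may modify $y$ on the $I$-thin exceptional set so that $y$ itself stays bounded (replacing any offending $y_k$ by $x_k$ changes neither $L_y$ nor the exceptional set's density), hence without loss of generality $\{y_k:k\in\mathbb N\}\subset[-G',G']$ for some $G'>0$.

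Next, set $B=\{k:x_k\neq y_k\}$, so $d_I(B)=0$, and $\{x\}_B$ is by definition an $I$-thin subsequence of $x$. It remains to show that $\{x_k:k\in\mathbb N\setminus B\}\cup\Gamma_x^S(I)$ is compact. But on $\mathbb N\setminus B$ we have $x_k=y_k$, so this set equals $\{y_k:k\in\mathbb N\setminus B\}\cup L_y$. I would then show this set is closed and bounded. Boundedness is immediate since both $\{y_k:k\in\mathbb N\setminus B\}\subset[-G',G']$ and $L_y\subset[-G',G']$. For closedness, take a limit point $\xi$ of $\{y_k:k\in\mathbb N\setminus B\}\cup L_y$. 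Either $\xi$ is a limit point of $L_y$, in which case $\xi\in L_y$ because $L_y$, the set of ordinary limit points of a sequence, is always closed; or $\xi$ is a limit point of $\{y_k:k\in\mathbb N\setminus B\}$, which means every neighbourhood of $\xi$ contains infinitely many terms $y_k$ with $k\in\mathbb N\setminus B$ (or at least terms with arbitrarily large indices, after the usual argument distinguishing whether $\xi$ equals some $y_k$), hence $\xi$ is an ordinary limit point of the subsequence $\{y\}_{\mathbb N\setminus B}$ and therefore of $y$, so $\xi\in L_y\subset\{y_k:k\in\mathbb N\setminus B\}\cup L_y$.

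The main obstacle I anticipate is the delicate point in the second case above: from ``$\xi$ is a limit point of the \emph{set} $\{y_k:k\in\mathbb N\setminus B\}$'' one must extract an actual convergent subsequence of $y$ indexed within $\mathbb N\setminus B$. If the set $\{y_k:k\in\mathbb N\setminus B\}$ is infinite this is routine; if it is finite, then $\xi$ cannot be a limit point of a finite set unless $\xi$ is itself one of those finitely many values, in which case some value $y_{k}$ is attained infinitely often on $\mathbb N\setminus B$ (since $\mathbb N\setminus B$ is infinite), giving a constant, hence convergent, subsequence with limit $\xi$, so again $\xi\in L_y$. Handling these cases carefully — and making sure the ``limit point of a set'' versus ``cluster point of a sequence'' distinction is navigated correctly — is the only real subtlety; everything else is bookkeeping built on Theorem 4.2 and the classical closedness of $L_y$.
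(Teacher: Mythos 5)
The paper actually states this theorem without giving a proof, so there is nothing to compare against; judged on its own terms, your skeleton (invoke Theorem 4.2, take $B=\{k:x_k\neq y_k\}$, prove closedness via the closed set $L_y$) is the natural route. However, the ``without loss of generality $y$ is bounded'' step is a genuine flaw. Replacing each offending $y_k$ by $x_k$ does keep the disagreement set $I$-thin, but it can change $L_y$: the replaced values $x_k$, ranging over an infinite (though $I$-thin) index set, may accumulate at a point which is \emph{not} an $I$-statistical cluster point of $x$, so for the modified sequence $y'$ one only gets $L_{y'}\supseteq \Gamma_x^S(I)$, possibly strictly. Concretely, let $B_1$ be an infinite set with $d_I(B_1)=0$, put $x_k=1+1/k$ for $k\in B_1$ and $x_k=0$ otherwise, so $\Gamma_x^S(I)=\{0\}$; Theorem 4.2 may return $y$ with $y_k=k$ on $B_1$ and $y_k=0$ elsewhere, and your modification turns $y$ essentially into $x$ itself, giving $L_{y'}=\{0,1\}\neq\Gamma_x^S(I)$ and a finite new disagreement set $B'$; then $\{x_k:k\in\mathbb N\setminus B'\}\cup\Gamma_x^S(I)$ is not closed (the limit point $1$ is missing). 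Thus after the modification the identification $\{x_k:k\in\mathbb N\setminus B\}\cup\Gamma_x^S(I)=\{y_k:k\in\mathbb N\setminus B\}\cup L_y$, on which the rest of your argument rests, can fail, and the set $B$ you exhibit need not satisfy the theorem. (Your earlier parenthetical reason for $L_y$ being bounded is also not a proof; boundedness of $L_y$ comes from $L_y=\Gamma_x^S(I)$, not from the complement of $B$ being infinite.)

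The repair is easy and makes the extra step unnecessary: keep the original $y$ from Theorem 4.2 and $B=\{k:x_k\neq y_k\}$; you never need $y$ itself to be bounded. Boundedness of the union holds because $\{y_k:k\in\mathbb N\setminus B\}=\{x_k:k\in\mathbb N\setminus B\}\subset[-G,G]$, and also $\Gamma_x^S(I)\subset[-G,G]$, since for $|\gamma|>G+\varepsilon$ the set $\{k:|x_k-\gamma|<\varepsilon\}$ is empty and hence has $I$-asymptotic density zero. With that substitute for your boundedness step, the remainder of your proof is correct: $d_I(B)=0$ forces $\mathbb N\setminus B$ to be infinite, every accumulation point of the set $\{y_k:k\in\mathbb N\setminus B\}$ is a limit point of the subsequence $\{y\}_{\mathbb N\setminus B}$ and hence lies in $L_y=\Gamma_x^S(I)$ (your finite-range case analysis is handled correctly), $L_y$ is closed, and the union of the two sets is therefore closed and bounded, hence compact by Heine--Borel.
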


\noindent\textbf{Acknowledgment:} 
The second author is grateful to University Grants Commissions, India for his fellowship funding under UGC-JRF (SRF fellowship) scheme during the preparation of this paper.
\\

\end{document}